\documentclass[11pt]{article}

\usepackage[T1]{fontenc}
\usepackage[utf8]{inputenc}
\usepackage{lmodern}
\usepackage[margin=1.02in]{geometry}
\usepackage{amsmath,amssymb,amsthm,mathtools}
\usepackage{microtype}
\usepackage{enumitem}
\usepackage[hidelinks]{hyperref}
\hypersetup{
  pdftitle={The Hayman--Wu constant is pi squared},
  pdfauthor={Paata Ivanisvili},
  pdfsubject={The sharp Hayman--Wu theorem for straight lines},
  pdfkeywords={Hayman--Wu theorem, conformal mapping, univalent function, Schwarz reflection, Herglotz representation}
}

\allowdisplaybreaks
\setlist[enumerate]{itemsep=0.3em,topsep=0.3em}

\newtheorem{theorem}{Theorem}[section]
\newtheorem{lemma}[theorem]{Lemma}
\theoremstyle{remark}
\newtheorem{remark}[theorem]{Remark}

\newcommand{\D}{\mathbb D}
\newcommand{\Hh}{\mathbb H}
\newcommand{\T}{\mathbb T}
\newcommand{\R}{\mathbb R}
\newcommand{\C}{\mathbb C}
\newcommand{\Rhat}{\widehat{\mathbb R}}
\newcommand{\Haus}{\mathcal H^1}

\newcommand{\HW}{C_{\mathrm{HW}}}

\title{The Hayman--Wu constant is $\pi^2$}
\author{Paata Ivanisvili\thanks{Department of Mathematics, University of California,
Irvine, 510C Rowland Hall, Irvine, CA 92697-3875, USA.
Email: \href{mailto:pivanisv@uci.edu}{\texttt{pivanisv@uci.edu}}.}}
\date{August 2026}

\begin{document}
\maketitle

\begin{abstract}
We show that for every conformal map $\phi:\mathbb D\to\Omega\subsetneq\mathbb C$
from the unit disk onto a simply connected proper domain $\Omega$, the length of
$\phi^{-1}(\Omega\cap L)$ is at most $\pi^2$ for every line $L$.
\end{abstract}

\noindent\emph{2020 Mathematics Subject Classification.}
Primary 30C35; Secondary 30C75, 30D40.

\medskip
\noindent\emph{Key words and phrases.}
Hayman--Wu theorem, conformal length, univalent function, Schwarz reflection,
contact selector, Herglotz representation.

\section{Introduction}

For a simply connected proper domain $\Omega\subsetneq\C$, a conformal map
$\phi:\D\to\Omega$, and a line $L\subset\C$, set
\[
  \mathcal L(\phi,L)
  :=\Haus\!\left(\phi^{-1}(L\cap\Omega)\right)
\]
and let
\[
  \HW:=\sup\left\{
  \mathcal L(\phi,L):
  \substack{\Omega\subsetneq\C\text{ simply connected},\quad
  \phi:\D\to\Omega\text{ conformal},\\
  L\subset\C\text{ a line}}
  \right\}.
\]

\begin{theorem}\label{thm:main}
Let $\Omega\subsetneq\C$ be simply connected, let $\phi:\D\to\Omega$ be
conformal, and let $L$ be a line. Then
\begin{equation}\label{eq:main}
  \Haus\!\left(\phi^{-1}(L\cap\Omega)\right)\le \pi^2.
\end{equation}
Consequently, together with {\O}yma's lower construction,
\[
  \HW=\pi^2.
\]
\end{theorem}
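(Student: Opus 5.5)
The plan is to normalize $L=\R$ by a rigid motion, set $\psi:=\phi^{-1}:\Omega\to\D$, and write the quantity in \eqref{eq:main} as
\[
  \mathcal L(\phi,\R)=\sum_j\int_{I_j}|\psi'(x)|\,dx ,
\]
where the $I_j$ are the (at most countably many) components of $\Omega\cap\R$. Each $I_j$ is a crosscut of $\Omega$, so $\psi(I_j)$ is a crosscut of $\D$. The argument should be organized around these crosscuts rather than around the line as a whole. Throughout, ``sharp'' means that every inequality used must be an equality in the limiting configuration of {\O}yma's construction. I expect that configuration to be symmetric under reflection in $\R$, and I would check each step against it as I go.

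\emph{Step 1 (Schwarz reflection).} For each component $I_j$ I would look at the two sides $\Omega_j^\pm$ of $\Omega\setminus I_j$ adjacent to $I_j$, and compare with the domain obtained by reflecting one side across $\R$. The goal is a two-sided domination of $|\psi'|$ on $I_j$ by the corresponding derivative in a domain that is symmetric in $\R$, on which $\psi$ restricted to $I_j$ maps onto a diameter-like curve. The tool is the subordination principle for Green's functions and harmonic measure. This should reduce matters to an inequality for each crosscut, with a density term recording how ``deep'' the crosscut sits in $\D$.

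\emph{Step 2 (contact selector).} For each $x\in\Omega\cap\R$ I would choose a nearest boundary point $\zeta(x)\in\partial\Omega$, measurably in $x$; this is the contact selector. I would then bound $|\psi'(x)|$ by the harmonic measure at $\psi(x)$ of a boundary arc, through the Koebe-type identity relating $|\psi'(x)|\,\mathrm{dist}(x,\partial\Omega)$ to $1-|\psi(x)|^2$. Summing over $x$ converts $\mathcal L$ into an integral over $\T$ of a nonnegative weight $w(\xi)$, which counts how often the selected contacts see the boundary point $\xi$.

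\emph{Step 3 (Herglotz).} The weight $w$ is the boundary density of a positive harmonic function $u$ on $\D$. In the symmetric model, the collection of all crosscuts gives a holomorphic $F=u+i\tilde u$ with $\operatorname{Re}F>0$. By the Herglotz representation, $F(z)=\int_\T\frac{\xi+z}{\xi-z}\,d\mu(\xi)$, and $\mu(\T)=u(0)$ is controlled by the normalization. Evaluating the length as an integral of the Poisson kernel along a diameter gives a factor $\pi$. The mass bound gives a second factor $\pi$, coming from the extremal angle $\pi$ at a contact point. Together these yield $\pi^2$.

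The main obstacle is Step 1 together with Step 2: the domination must lose no constant, even though different crosscuts interact (a crosscut deep inside $\Omega_j^+$ constrains $I_j$). I would first try to make the selector consistent, so that distinct crosscuts charge disjoint parts of $\T$. The natural tool is the nesting of the crosscuts $\psi(I_j)$, which are disjoint, and the corresponding disjointness of their shadows on $\T$. If exact disjointness fails, I would fall back on a Carleson-type overlap count with multiplicity one, which is exactly what the sharp constant requires. The equality $\HW=\pi^2$ then follows from \eqref{eq:main} together with {\O}yma's lower bound.
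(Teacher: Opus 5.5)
Your proposal has a genuine gap. The one estimate that carries the sharp constant is never formulated, and the tools you name for it cannot be sharp.

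In Step~2, the relation between $|\psi'(x)|\,\mathrm{dist}(x,\partial\Omega)$ and $1-|\psi(x)|^2$ is not an identity. It is the Koebe bound $\tfrac14(1-|\psi(x)|^2)\le|\psi'(x)|\,\mathrm{dist}(x,\partial\Omega)\le 1-|\psi(x)|^2$, and both constants are attained (the disk, the Koebe function). So anything routed through a nearest-point selector is accurate only up to a factor as large as $4$. The subordination of harmonic measure or Green's functions in Step~1 is a one-sided monotonicity, so it cannot give the ``two-sided domination losing no constant'' you require.

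The summation step also fails as proposed. The shadows on $\T$ of the disjoint crosscuts $\psi(I_j)$ are nested, not disjoint: a thin snake crossing $\R$ $N$ times, with $\phi(0)$ at one end, gives multiplicity $N$. So there is no multiplicity-one overlap count for them. Step~3 never identifies the harmonic function $u$, its Herglotz measure, or the normalization bounding $\mu(\T)$. The split $\pi^2=\pi\cdot\pi$ corresponds to no actual inequality.

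What is missing is the paper's per-component lemma, which uses a different selector.
\begin{itemize}
\item First reduce to an analytic Jordan domain via $\Omega_r=\phi(r\D)$.
\item For each component $I_k$ of $\Omega\cap\R$, take the component $V_k$ of $\Omega\cap\Omega^*$ containing $I_k$, where $\Omega^*$ is the reflection of $\Omega$. Take a symmetric uniformizer $G_k:\Hh\to V_k$ with $G_k(i\R_+)=I_k$.
\item The selector lives on the parameter line: from each pair $\pm x$, keep the point that $G_k$ sends into $\partial\Omega$ rather than $\partial\Omega^*$. This gives $E_k$.
\end{itemize}

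For $F_k=\psi\circ G_k$ the paper proves $\int_0^\infty|F_k'(iy)|\,dy\le\frac\pi2\int_{E_k}|F_k'(x)|\,dx$. The ingredients are:
\begin{itemize}
\item the factorization $F_k=BQ$, with $B$ the Blaschke factor of the possible zero;
\item $Q=e^{ih}$, where $h$ is a Pick function whose Herglotz measure $\mu$ has no mass on $E_k$;
\item the no-cancellation identity $|F_k'|=p+h'$ on $E_k$;
\item Schwarz--Pick, giving $|Q'(iy)|\le\int d\mu(t)/(t^2+y^2)$;
\item the kernel-by-kernel comparison $\int_0^\infty(t^2+y^2)^{-1}\,dy=\pi/(2|t|)$ against $\int_{E_k}(x-t)^{-2}\,dx\ge 1/|t|$.
\end{itemize}

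The sets $\psi(G_k(E_k))\subset\T$ are pairwise disjoint up to single points, because distinct $V_k$ share at most one boundary point. Hence their lengths sum to at most $2\pi$. The constant is therefore $\pi^2=\frac\pi2\cdot 2\pi$. Without this lemma, or a substitute with the same constant, your outline does not bound the length by $\pi^2$.
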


Hayman and Wu proved that $\HW<\infty$ in \cite{HaymanWu}. {\O}yma obtained
the elementary upper bound $4\pi$ in \cite{OymaHarmonic}, Rohde proved
$\HW<4\pi$ in \cite{Rohde}, and {\O}yma constructed examples with length
tending to $\pi^2$ in \cite{OymaConstant}. Brown Flinn proved the sharp
bound when the line is contained in the image domain \cite{BrownFlinn}, and
Crane obtained it for a broader reflected-component class \cite{Crane}.

Our proof follows the reflected-domain reduction of {\O}yma, in the form
presented by Garnett and Marshall \cite[Chapter~I, Section~5]{GarnettMarshall}.
The new input is the contact-selector estimate, Lemma~\ref{lem:selector}.
Here is the argument in outline. After an inner approximation and a Euclidean
motion, assume that $\Omega$ is an analytic Jordan domain and that $L=\R$.
Let
\[
  \Omega^*=\{\overline z:z\in\Omega\},
  \qquad U=\Omega\cap\Omega^*.
\]
For each component $V$ of $U$ meeting $\R$, choose a symmetric conformal map
\[
  G_V:\Hh\to V,
  \qquad G_V(i\R_+)=V\cap\R.
\]
From every reflected pair $x,-x\in\partial\Hh$, select one parameter whose
image under $G_V$ lies on the original boundary $\partial\Omega$. The selected
set $E_V$ is a finite union of intervals. If $f=\phi^{-1}$ and
$F_V=f\circ G_V$, then
\[
  \int_0^\infty |F_V'(iy)|\,dy
  \le \frac\pi2\int_{E_V}|F_V'(x)|\,dx.
\]
The two sides are the lengths of $f(V\cap\R)$ and of a selected boundary set
$f(A_V)\subset\T$. Distinct reflected components use disjoint boundary arcs,
modulo endpoints, and therefore
\[
  \sum_V\Haus(f(A_V))\le2\pi.
\]
Summing gives $\pi^2=(\pi/2)(2\pi)$.

Throughout,
\[
  \Hh=\{z:\operatorname{Im}z>0\},
  \qquad \T=\partial\D,
  \qquad \Rhat=\R\cup\{\infty\}.
\]
All nonnegative integrals below are understood in the extended sense until
finiteness has been established.

\section{The contact-selector estimate}

\begin{lemma}[Contact-selector lemma]\label{lem:selector}
Let $F:\Hh\to\D$ be univalent, and let $E\subset\R$ be a finite union of
pairwise disjoint open intervals, possibly unbounded. Assume that
\begin{enumerate}[label=\textup{(\roman*)},leftmargin=2.4em]
\item for almost every $x>0$, exactly one of $x,-x$ belongs to $E$;
\item $F$ extends holomorphically across every interval comprising $E$;
\item $|F|=1$ on $E$.
\end{enumerate}
Then
\begin{equation}\label{eq:selector-main}
  \int_0^\infty |F'(iy)|\,dy
  \le \frac\pi2\int_E|F'(x)|\,dx.
\end{equation}
\end{lemma}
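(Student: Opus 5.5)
The plan is to turn the contact condition (iii) into positivity of a Herglotz (Pick) function, bound $|F'(iy)|$ by a Poisson-type integral of that function's measure, and then use the selector condition (i) to compare the $y$-integral with the integral over $E$. The core computation is clean; the zero of $F$ is the obstacle.

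\textbf{Construct a Pick function.} First assume $F$ has no zero in $\Hh$. Choose a branch of $\log F$ and set $\Psi=-i\log F$, so that $\operatorname{Im}\Psi=-\log|F|>0$ on $\Hh$. Thus $\Psi:\Hh\to\Hh$, and the Herglotz representation gives
\[
\Psi(z)=\alpha z+\beta+\int_{\R}\Bigl(\frac1{t-z}-\frac t{1+t^2}\Bigr)d\mu(t),
\qquad \alpha\ge0,\ \mu\ge0 .
\]
By (ii)--(iii), $\operatorname{Im}\Psi$ extends continuously by $0$ across each interval of $E$, so $\mu(E)=0$. Differentiating,
\[
\Psi'(z)=\alpha+\int\frac{d\mu(t)}{(t-z)^2}.
\]
On $E$, $\Psi$ is real and increasing, and $|F|=1$ there, so
\[
|F'(x)|=|\Psi'(x)|=\alpha+\int\frac{d\mu(t)}{(t-x)^2},\qquad x\in E .
\]

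\textbf{Bound the left side.} At $z=iy$ we have $|F'(iy)|=|F(iy)|\,|\Psi'(iy)|\le|\Psi'(iy)|$. Also $|t-iy|^2=t^2+y^2$, so
\[
|F'(iy)|\le\alpha+\int\frac{d\mu(t)}{t^2+y^2}.
\]
We need $\alpha=0$. If $E$ is nonempty, the linear term contributes $\alpha\,|E|$ to the right side, and $|E|=\infty$ by (i), so $\alpha>0$ makes \eqref{eq:selector-main} trivial. (If the right side is finite, $\alpha=0$ is forced.) With $\alpha=0$, Tonelli and $\int_0^\infty\frac{dy}{t^2+y^2}=\frac{\pi}{2|t|}$ give
\[
\int_0^\infty|F'(iy)|\,dy\le\frac\pi2\int\frac{d\mu(t)}{|t|}.
\]

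\textbf{Use the selector.} It then suffices to prove, for every $t\in\R\setminus E$, that
\[
\int_E\frac{dx}{(x-t)^2}\ge\frac1{|t|}.
\]
Then Tonelli again gives
\[
\frac\pi2\int_E|F'(x)|\,dx=\frac\pi2\int\!\int_E\frac{dx}{(x-t)^2}\,d\mu(t)\ge\frac\pi2\int\frac{d\mu}{|t|}.
\]
To prove the inequality, pair $x=\pm s$ for $s>0$. By (i), for a.e.\ $s$ exactly one of $\pm s$ lies in $E$, so the integrand contributes at least
\[
\min\Bigl\{\frac1{(s-t)^2},\frac1{(s+t)^2}\Bigr\}\ge\frac1{(s+|t|)^2}.
\]
Hence
\[
\int_E\frac{dx}{(x-t)^2}\ge\int_0^\infty\frac{ds}{(s+|t|)^2}=\frac1{|t|}.
\]
The case $t=0$ gives $+\infty$. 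The endpoints of $E$ and a possible point mass of $\mu$ there need a short continuity argument, since $\mu$ lives on $\overline{\R\setminus E}$.

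\textbf{Main obstacle: the zero of $F$.} The obstacle is removing the assumption $F\neq0$; a univalent $F$ has at most one zero $z_0\in\Hh$. I would first try $\Psi=-i\log(F/b)$ with
\[
b(z)=\frac{z-z_0}{z-\overline{z_0}},
\]
which is unimodular on $\R$. Then $F/b$ still has modulus $1$ on $E$, but $|F'|$ on $E$ and at $iy$ picks up the extra term $b'/b$. I would try to show this term has the right sign: it is real and positive on $\R$, which helps the right side, and it must be controlled at $iy$. Failing that, I would precompose $F$ with a disk rotation and use the explicit form of $-ib'/b$ as the derivative of a Pick-type function to absorb it. A fallback is to perturb $F$ by a disk automorphism $\tau_\varepsilon$ that moves the zero toward the boundary and pass to the limit, if the estimate can be made stable under that perturbation.
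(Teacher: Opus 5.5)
Your zero-free argument is correct and is essentially the paper's: the Herglotz representation of $-i\log F$, $\mu(E)=0$ by Stieltjes inversion, $\alpha=0$ because $E$ has infinite measure, Tonelli with $\int_0^\infty dy/(t^2+y^2)=\pi/(2|t|)$, and the pairing bound $\int_E(x-t)^{-2}\,dx\ge1/|t|$. No continuity argument at the endpoints of $E$ is needed: your pairing proves this bound for every $t\ne0$, and at $t=0$ it reads $\infty\ge\infty$.

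The genuine gap is the case where $F$ has a zero $a=u+iv\in\Hh$, which you leave open. It cannot be sidestepped. In the application, $F_k=f\circ G_k$ vanishes whenever $f^{-1}(0)\in V_k$. Moreover, the paper's sharpness example ($F=B_{u+iv}$, $E=(-\infty,0)$, $u/v\to\infty$) shows that the Blaschke factor by itself saturates the constant $\pi/2$, so any lossy treatment of it fails.

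Your fallbacks do not close the gap:
\begin{itemize}
\item $-i\log B_a$ has a logarithmic singularity at $a$, so it has no Herglotz representation to merge with $\mu$.
\item Composing with a rotation of $\D$ leaves the zero where it is.
\item For a general disk automorphism $\tau$ one has $|(\tau\circ F)'|=|\tau'(F)|\,|F'|$, with a weight that differs on the ray and on $E$. So an inequality for $\tau\circ F$ does not transfer to $F$.
\item Making $\tau\circ F$ zero-free requires $\tau^{-1}(0)\notin F(\Hh)$. Since $0\in F(\Hh)$, this is not a small perturbation from which to pass to a limit.
\end{itemize}

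What you need is the following. Write $F=B_aQ$. By the maximum principle $|Q|\le1$, so $Q=e^{ih}$ with $h$ a Pick function or a real constant.

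On $E$ one has $F'/F=i(p+h')$ with $p(x)=2v/((x-u)^2+v^2)>0$ and $h'\ge0$. Hence $|F'|=p+h'$ exactly, with no cancellation.

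On the ray, $|B_a|,|Q|\le1$ give $|F'|\le|B_a'|+|Q'|$. The $Q$-terms are handled by your argument. The $B_a$-terms require the explicit estimate
\[
\int_0^\infty|B_a'(iy)|\,dy=2r\arctan(1/r)\le\pi\arctan r=\frac\pi2\int_0^\infty\min\{p(x),p(-x)\}\,dx\le\frac\pi2\int_Ep(x)\,dx,
\qquad r=\frac{v}{|u|}.
\]
The last step is the same selector pairing applied to $p$. The middle inequality is a sharp one-variable fact: put $r=\tan s$ and use $\sin s\le s$ and $\cos s\ge2(\pi/2-s)/\pi$ on $[0,\pi/2]$. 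For $u=0$ the left side is $2$ while $\int_Ep\ge\pi$. Adding the $B_a$ and $Q$ estimates gives the lemma.
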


\begin{proof}
We may assume
\begin{equation}\label{eq:Afinite}
  A:=\int_E|F'(x)|\,dx<\infty.
\end{equation}

\smallskip
\noindent\emph{Factorization and boundary positivity.}
The function $F$ has at most one zero. If $F(a)=0$, where $a=u+iv$ and
$v>0$, put
\[
  B=B_a,\qquad B_a(z)=\frac{z-a}{z-\overline a};
\]
otherwise put $B\equiv1$. Schwarz--Pick gives $|F|\le|B_a|$ in the first
case. Hence in either case
\begin{equation}\label{eq:FBQ}
  F=BQ,
  \qquad Q\ne0,
  \qquad |B|,|Q|\le1.
\end{equation}
Write $Q=e^{ih}$. Then $\operatorname{Im}h\ge0$, and either $h$ is real
constant or $h:\Hh\to\Hh$ is a Pick function.

On every interval contained in $E$, the function $Q$ extends holomorphically,
is nonvanishing, and is unimodular. A local logarithm therefore extends $h$
holomorphically through that interval, with real boundary values. The Herglotz representation \cite[Chapter~I]{Duren} has the form
\begin{equation}\label{eq:Herglotz}
  h(z)=c+\alpha z+
  \int_{\R}\left(\frac1{t-z}-\frac{t}{1+t^2}\right)d\mu(t),
\end{equation}
where $c\in\R$, $\alpha\ge0$, and $\mu$ is positive with
$\int(1+t^2)^{-1}\,d\mu(t)<\infty$. By the standard Stieltjes inversion
formula, holomorphic continuation with real boundary values through an
interval implies that $\mu$ has no mass there. Consequently, for $x\in E$,
\begin{equation}\label{eq:hprime-boundary}
  h'(x)=\alpha+\int_{\R}\frac{d\mu(t)}{(t-x)^2}\ge0.
\end{equation}

If the zero $a=u+iv$ is present, set
\[
  p(x)=\frac{2v}{(x-u)^2+v^2};
\]
otherwise set $p=0$. Since $B'(x)/B(x)=ip(x)$ on $\R$, we obtain the exact
identity
\begin{equation}\label{eq:no-cancellation}
  |F'(x)|=p(x)+h'(x),
  \qquad x\in E.
\end{equation}
The selector has infinite Lebesgue measure. Thus \eqref{eq:Afinite},
\eqref{eq:hprime-boundary}, and \eqref{eq:no-cancellation} force
\begin{equation}\label{eq:alpha-zero}
  \alpha=0.
\end{equation}

\smallskip
\noindent\emph{The zero-free factor.}
Schwarz--Pick for $h$ and \eqref{eq:Herglotz} give
\[
  |Q'(iy)|
  \le\frac{\operatorname{Im}h(iy)}{y}
  =\int_{\R}\frac{d\mu(t)}{t^2+y^2}.
\]
Hence, by Tonelli,
\begin{equation}\label{eq:Qvertical}
  \int_0^\infty|Q'(iy)|\,dy
  \le\frac\pi2\int_{\R}\frac{d\mu(t)}{|t|}.
\end{equation}
On the other hand, the selector property gives, for every $t\ne0$,
\begin{equation}\label{eq:kernel}
  \int_E\frac{dx}{(x-t)^2}\ge\frac1{|t|}.
\end{equation}
Indeed, for $t>0$ each pair $x,-x$ contributes one of
$(x-t)^{-2},(x+t)^{-2}$, and the smaller one integrates over $x>0$ to $1/t$;
the case $t<0$ is symmetric, while for $t=0$ the inner integral is infinite.
Therefore
\begin{equation}\label{eq:Qestimate}
  \int_Eh'(x)\,dx
  =\int_{\R}\left(\int_E\frac{dx}{(x-t)^2}\right)d\mu(t)
  \ge\int_{\R}\frac{d\mu(t)}{|t|},
\end{equation}
and
\begin{equation}\label{eq:Qfinal}
  \int_0^\infty|Q'(iy)|\,dy
  \le\frac\pi2\int_Eh'(x)\,dx.
\end{equation}

\smallskip
\noindent\emph{The possible zero.}
If $a=u+iv$ is present, then
\[
  |B_a'(iy)|=\frac{2v}{u^2+(y+v)^2}.
\]
For $u\ne0$, writing $r=v/|u|$, we have
\begin{equation}\label{eq:Bvertical}
  \int_0^\infty|B_a'(iy)|\,dy=2r\arctan(1/r),
\end{equation}
whereas the selector property yields
\begin{equation}\label{eq:Bboundary}
  \int_Ep(x)\,dx
  \ge\int_0^\infty\min\{p(x),p(-x)\}\,dx
  =2\arctan r.
\end{equation}
The elementary inequality
\[
  2r\arctan(1/r)\le\pi\arctan r
\]
follows on writing $r=\tan s$ and using
$\cos s\ge2(\pi/2-s)/\pi$ and $\sin s\le s$, both for
$0\leq s \leq \pi/2$. Hence
\begin{equation}\label{eq:Bfinal}
  \int_0^\infty|B_a'(iy)|\,dy
  \le\frac\pi2\int_Ep(x)\,dx.
\end{equation}
For $u=0$, the two sides before multiplication by $\pi/2$ are respectively
$2$ and $\pi$, so \eqref{eq:Bfinal} still holds.

Finally, $F=BQ$ and $|B|,|Q|\le1$ imply $|F'|\le|B'|+|Q'|$ on $i\R_+$.
Combining \eqref{eq:Qfinal}, \eqref{eq:Bfinal}, and
\eqref{eq:no-cancellation} proves \eqref{eq:selector-main}.
\end{proof}

\begin{remark}[Sharpness]\label{rem:sharpness}
Let $F=B_{u+iv}$ with $u>0$ and take $E=(-\infty,0)$. As $u/v\to\infty$,
\[
  \int_0^\infty|F'(iy)|\,dy\sim\frac{\pi v}{u},
  \qquad
  \int_E|F'(x)|\,dx\sim\frac{2v}{u}.
\]
Thus the constant $\pi/2$ in Lemma~\ref{lem:selector} is sharp.
\end{remark}

\section{The analytic Jordan case}

Assume that $\Omega$ is a bounded analytic Jordan domain, let
$f:\Omega\to\D$ be conformal, and put
\[
  \Omega^*=\{\overline z:z\in\Omega\},
  \qquad U=\Omega\cap\Omega^*.
\]
Let $L_k$ be the components of $\Omega\cap\R$, and let $V_k$ be the component
of $U$ containing $L_k$. We use the same standard planar facts as in the
classical proof of Garnett and Marshall \cite[pp.~23--25]{GarnettMarshall}:
each $V_k$ is a Jordan domain symmetric about $\R$, one has
$V_k\cap\R=L_k$, and, if $j\ne k$,
\begin{equation}\label{eq:classical-separation}
  \partial V_j\cap\partial V_k\subset\R,
  \qquad
  \#(\partial V_j\cap\partial V_k)\le1.
\end{equation}
Only finitely many $L_k$ occur. Indeed, unless $\Omega=\Omega^*$, the two
analytic Jordan curves $\partial\Omega$ and $\partial\Omega^*$ meet only
finitely many times.

By symmetry and the Carath\'eodory theorem, for every $k$ there is a conformal
map
\begin{equation}\label{eq:Gk}
  G_k:\Hh\to V_k
\end{equation}
which extends homeomorphically from $\Rhat$ onto $\partial V_k$ and satisfies
\begin{equation}\label{eq:Gk-symmetry}
  G_k(i\R_+)=L_k,
  \qquad
  G_k(-\overline z)=\overline{G_k(z)}.
\end{equation}

We now construct the selector. If $\Omega\ne\Omega^*$, cut $\Rhat$ at
$\infty$ and at the preimages under $G_k$ of
$\partial\Omega\cap\partial\Omega^*$; if $\Omega=\Omega^*$, cut only at
$\infty$. Add the reflected cut points and $0$. The remaining real intervals
form a finite symmetric family. On each such interval, $G_k$ maps one-to-one
onto an analytic boundary arc lying entirely in $\partial\Omega$ or entirely
in $\partial\Omega^*$, and the Schwarz reflection principle extends $G_k$
holomorphically across the interval.

For every reflected pair of these intervals choose the one mapped into
$\partial\Omega$; when both are, choose the positive one. This is possible
because for every nonexceptional $x>0$,
\[
  G_k(-x)=\overline{G_k(x)},
  \qquad
  G_k(x)\in\partial\Omega\cup\partial\Omega^*.
\]
Let $E_k$ be the union of the selected intervals and put
\[
  A_k=G_k(E_k)\subset\partial V_k\cap\partial\Omega,
  \qquad
  F_k=f\circ G_k:\Hh\to\D.
\]
The Carath\'eodory theorem and Schwarz reflection show that $F_k$ extends
holomorphically across every selected interval and satisfies $|F_k|=1$ there.
Lemma~\ref{lem:selector} therefore applies.

Using the standard arclength formula for injective parametrizations, we get
\begin{align}
  \Haus(f(L_k))
  &=\int_0^\infty|F_k'(iy)|\,dy \notag\\
  &\le\frac\pi2\int_{E_k}|F_k'(x)|\,dx
   =\frac\pi2\Haus(f(A_k)).
  \label{eq:component-estimate}
\end{align}
By \eqref{eq:classical-separation}, the sets $A_k$ are pairwise disjoint
modulo at most one point. Since $f:\partial\Omega\to\T$ is a homeomorphism,
\[
  \sum_k\Haus(f(A_k))\le\Haus(\T)=2\pi.
\]
The intervals $L_k$ are disjoint and exhaust $\Omega\cap\R$. Summing
\eqref{eq:component-estimate} gives
\begin{equation}\label{eq:analytic-Jordan}
  \Haus(f(\Omega\cap\R))\le\pi^2.
\end{equation}

\section{Completion of the proof}

\begin{proof}[Proof of Theorem~\ref{thm:main}]
A Euclidean motion reduces the line to $\R$. Write $f=\phi^{-1}$ and, for
$0<r<1$, set
\[
  \Omega_r=\phi(r\D),
  \qquad
  f_r=\frac fr:\Omega_r\to\D.
\]
The domain $\Omega_r$ is a bounded analytic Jordan domain. Applying
\eqref{eq:analytic-Jordan} to $f_r$ gives
\[
  \Haus(f(\Omega_r\cap\R))\le r\pi^2.
\]
Moreover,
\[
  f(\Omega_r\cap\R)
  =f(\Omega\cap\R)\cap r\D,
\]
and these Borel sets increase to $f(\Omega\cap\R)$ as $r\uparrow1$.
Continuity from below of $\Haus$ therefore gives
\[
  \Haus(f(\Omega\cap\R))\le\pi^2.
\]
Finally, {\O}yma's examples \cite{OymaConstant} show that the constant cannot
be decreased.
\end{proof}

\section*{Acknowledgments}

The author acknowledges partial support from the NSF CAREER grant
DMS-2152401, NSF grant DMS-2554183, a Simons Fellowship, and a Humboldt
Research Fellowship for Experienced Researchers. The author acknowledges the
use of AI tools.


\begin{thebibliography}{9}

\bibitem{BrownFlinn}
B. Brown Flinn,
\emph{Hyperbolic convexity and level sets of analytic functions},
Indiana Univ. Math. J. \textbf{32} (1983), no. 6, 831--841.

\bibitem{Crane}
E. Crane,
\emph{A note on the Hayman--Wu theorem},
Comput. Methods Funct. Theory \textbf{8} (2008), no. 2, 615--624.

\bibitem{Duren}
P. L. Duren,
\emph{Theory of $H^p$ Spaces},
Pure and Applied Mathematics, vol. 38, Academic Press, New York, 1970.

\bibitem{GarnettMarshall}
J. B. Garnett and D. E. Marshall,
\emph{Harmonic Measure},
New Mathematical Monographs, vol. 2, Cambridge University Press,
Cambridge, 2005.

\bibitem{HaymanWu}
W. K. Hayman and J. M. G. Wu,
\emph{Level sets of univalent functions},
Comment. Math. Helv. \textbf{56} (1981), 366--403.

\bibitem{OymaHarmonic}
K. {\O}yma,
\emph{Harmonic measure and conformal length},
Proc. Amer. Math. Soc. \textbf{115} (1992), no. 3, 687--689.

\bibitem{OymaConstant}
K. {\O}yma,
\emph{The Hayman--Wu constant},
Proc. Amer. Math. Soc. \textbf{119} (1993), no. 1, 337--338.

\bibitem{Rohde}
S. Rohde,
\emph{On the theorem of Hayman and Wu},
Proc. Amer. Math. Soc. \textbf{130} (2002), no. 2, 387--394.

\end{thebibliography}
\end{document}